\newtheorem{theorem}{Theorem}[section]      % Theorem numbering per section
\theoremstyle{definition}
\newtheorem{definition}[theorem]{Definition}
\theoremstyle{remark}
\def\tsc#1{\csdef{#1}{\textsc{\lowercase{#1}}\xspace}}
\begin{document}
\let\WriteBookmarks\relax
\def\floatpagepagefraction{1}
\def\textpagefraction{.001}

% Short title
\shorttitle{Direct Solver for Scaled Partial-Isometric Linear Systems}

% Short author
\shortauthors{S. Kar, M. Venkatapathi}  

% Main title of the paper
\title[mode=title]{An Optimal Least-Square Solver For Scaled Partial-Isometric Linear Systems}  

% Title footnote mark
% eg: \tnotemark[1]
\tnotemark[1] 

% Title footnote 1.
% eg: \tnotetext[1]{Title footnote text}
\tnotetext[1]{This work was funded by ......} 

% First author
%
% Options: Use if required
% eg: \author[1,3]{Author Name}[type=editor,
%       style=chinese,
%       auid=000,
%       bioid=1,
%       prefix=Sir,
%       orcid=0000-0000-0000-0000,
%       facebook=<facebook id>,
%       twitter=<twitter id>,
%       linkedin=<linkedin id>,
%       gplus=<gplus id>]

\author[1]{Suvendu Kar}%[<options>]

% Corresponding author indication
%\cormark[1]

% Footnote of the first author
\fnmark[1]

% Email id of the first author
\ead{sk73191491@gmail.com}

% URL of the first author
\ead[url]{}

% Credit authorship
% eg: \credit{Conceptualization of this study, Methodology, Software}
\credit{}

% Address/affiliation
\affiliation[1]{organization={Department of Computational and Data Sciences},
            addressline={Indian Institute of Science }, 
            city={Bangalore},
%          citysep={}, % Uncomment if no comma needed between city and postcode
            postcode={560012}, 
            state={karnataka},
            country={India}}

\author[1]{Murugesan Venkatapathi}%[]

\cormark[1]
% Footnote of the second author
\fnmark[2]

% Email id of the second author
\ead{murugesh@iisc.ac.in}

% URL of the second author
\ead[url]{}

% Credit authorship
\credit{}

% % Address/affiliation
% \affiliation[1]{organization={Department of Computational and Data Sciences},
%             addressline={Indian Institute of Science}, 
%             city={Bangalore},
% %          citysep={}, % Uncomment if no comma needed between city and postcode
%             postcode={560012}, 
%             state={Karnataka},
%             country={India}}

% Corresponding author text
\cortext[1]{Corresponding author}

% Footnote text
\fntext[1]{}

% For a title note without a number/mark
%\nonumnote{}

% Here goes the abstract
\begin{abstract}
We present an $O(mn)$ direct least-squares solver for $m \times n$ linear systems with a scaled partial isometry. The proposed algorithm is also useful when the system is block diagonal and each block is a scaled partial isometry with distinct scaling factors. We also include numerical experiments as a demonstration.
\end{abstract}

% Use if graphical abstract is present
%\begin{graphicalabstract}
%\includegraphics{}
%\end{graphicalabstract}

% Research highlights
%\begin{highlights}
%\item 
%\item 
%\item 
%\end{highlights}

% Keywords
% Each keyword is seperated by \sep
\begin{keywords}
 \sep Direct solver for linear systems \sep Scaled partial isometries  \sep Least-square solution \sep \vspace{0.5em}
\noindent\textit{MSC : } 65J10 , 65F05 ,15B33
\end{keywords}

\maketitle

\section{Introduction}\label{sec:introduction}

Obtaining a solution to a linear system of equations is a central task in numerical linear algebra and serves as a foundational component across a broad spectrum of quantitative disciplines. These systems, often expressed in the canonical form $Ax=b$, arise in numerous scientific, engineering, and data-driven applications, including but not limited to computational physics, signal and image processing, optimization, computer vision, machine learning, and numerical simulations. While solving a linear least-squares problem by computing the Moore-Penrose pseudoinverse \cite{svd} has a computational cost that scales cubically, the system in-hand may enjoy a special structure (for example, a system with a partial isometry) that can be utilized to evaluate the least-squares solution at a significantly lower computational cost.

A matrix $A \in \mathbb{C}^{m \times n}$ is said to be a partial isometry if $AA^*A=A$ \cite{partialisometryarxiv}, $A^*$ being the conjugate-transpose of $A$. We present an $O(mn)$ least-squares direct linear solver for the linear system $Ax=b$ where $A$ can be a \emph{scaled} partial isometry. Partial isometries represent a significant and appealing class of operators. An operator is called a partial isometry if it acts as an isometry when restricted to the orthogonal complement of its null space. These operators are fundamental in the field of operator theory; for example, they are essential in the polar decomposition of general operators and serve as a foundation for the theory of von Neumann algebras \cite{partialisometryelsiver}. Common instances of partial isometries include orthogonal projections, unitary operators, isometries, and co-isometries \cite{partialisometryelsiver}. Partial isometries describe certain quantum measurements and state transformations, especially when dealing with non-orthogonal states or incomplete measurements \cite{partialisometryieee}. In signal processing and frame theory, they are used in the construction of tight frames, filter banks, and optimal measurement vectors, where partial isometries ensure energy-preserving transformations on subspaces \cite{partialisometryieee}. In finite dimensions, partial isometries are used to study matrix factorizations, similarity, and unitary equivalence problems \cite{partialisometryarxiv}.

The paper is organized as follows: \cref{subsec:notation} provides the details of the notations used in this paper. A few useful properties of partial isometries are outlined in \cref{sec:properties}. \Cref{sec:methods} contains the proposed algorithm, its applicability, and its theoretical analysis. Experiments were conducted to validate the proposed algorithm, and the corresponding results can be seen in \cref{sec:numerical_experiments}. Finally, we conclude the paper with some remarks in \cref{sec:conclusion}.

\subsection{Notations Used}\label{subsec:notation}
The symbol $A^{\dagger}$ denotes the Moore–Penrose pseudo-inverse \cite{svd} of matrix $A \in \mathbb{C}^{m \times n}$. $A^*$ denotes the conjugate-transpose of $A$. $A^{(i)}, A_{(i)}$ denotes the $i^{\text{th}}$ column and row of the matrix, $A$ respectively. Unless specifically mentioned, $\|.\|$ as well as $\|.\|_2$ denote 2-norm, and $\|.\|_F$ denotes Frobenius norm. For two compatible column vectors $v_1,v_2$, we define the inner product as $\langle v_1, v_2\rangle = v_1^*v_2$.

\section{Properties of Partial Isometries}\label{sec:properties}

\begin{definition}
    A matrix $A \in \mathbb{C}^{m \times n}$ is siad to be a partial isometry if $AA^*A=A$ \cite{partialisometryarxiv}.
\end{definition}

While the above definition is brief, it offers limited insight into the concept of a partial isometry. Nevertheless, it does imply connections to unitary matrices, orthogonal projections, and the Moore–Penrose pseudoinverse—associations that are both meaningful and worth exploring

\begin{theorem}\label{thm:thm1}
    A matrix $A \in \mathbb{C}^{m \times n} $ is a partial isometry iff all its non-zero singular values (if any) are 1.
\end{theorem}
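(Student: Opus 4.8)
The plan is to use the singular value decomposition to translate the operator identity $AA^*A = A$ into a scalar condition on the singular values.
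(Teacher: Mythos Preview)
Your plan is exactly the approach the paper takes: write the reduced SVD $A=U\Sigma V^*$, substitute into $AA^*A=A$ to obtain $\Sigma^3=\Sigma$, and conclude that each nonzero singular value satisfies $\sigma^3=\sigma$ and hence $\sigma=1$; the converse is then immediate since $\Sigma=I_r$ gives $AA^*A=UV^*VU^*UV^*=UV^*=A$. Just be sure to carry out both directions explicitly, but otherwise there is nothing to add.
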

\begin{proof}
    ($\Rightarrow$) To show that if $A$ is a partial-isometry then all its non-zero singular values (if any) are 1.
    
    If $A$ is a zero matrix, then the claim trivially holds. Let's say, rank of $A$ be $r > 1$. Then the partial singular-value decomposition of $A$ can written as $A=U\Sigma V^*$, where $U \in \mathbb{C}^{m \times r}, \Sigma \in \mathbb{R}^{r \times r}, \text{and} \quad V \in \mathbb{C}^{n \times r}$. Now $A$ being a partial isometry: $AA^*A=A$ holds and thus 
    \begin{align*}
        &(U\Sigma V^*)(U\Sigma V^*)^*(U\Sigma V^*)=(U\Sigma V^*)\\
        &\implies U\Sigma^3V^* =U\Sigma V^*\\
        &\implies U^*(U\Sigma^3V^*)V = U^*(U\Sigma V^*)V\\
        &\implies \Sigma^3=\Sigma
    \end{align*}
    Now $\Sigma \in \mathbb{R}^{r \times r}$ being a full-rank diagonal matrix, from above it is clear that the diagonal entries of $\Sigma$ are all 1, as by convention non-zero singular values are positive. And hence all the non-zero singular values of $A$ in this case are 1.

    ($\Leftarrow$) To show that if all the non-zero singular values (if any) of $A$ are 1, then it is a partial isometry.

    If $A$ has no non-zero singular values, then trivially it is a partial isometry. Now, let's say $A$ has $r>1$ non-zero singular values, all of which are 1. Then, using partial singular-value decomposition $A=U\Sigma V^*$, where $\Sigma =I_r$, an $r \times r$ identity matrix. Thus, $A=UV^*$. Now, $AA^*A=(UV^*)(UV^*)^*(UV^*)=UV^*=A$. Thus, A is a partial isometry.
\end{proof}

\begin{theorem}\label{thm:thm2}
    For $A \in \mathbb{C}^{m \times n} $, the following statements has relation $(a) \implies (b)$ and $(a) \implies (c)$.\\
 (a)$A$ is a partial isometry\\
 (b)$A=WP$, where W is a partial isometry and P is an orthogonal projection.\\ 
 (c)$A=QW$, where W is a partial isometry and Q is an orthogonal projection.
\end{theorem}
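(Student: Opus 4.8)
The plan is to produce explicit witnesses $W$ and $P$ (respectively $Q$) built directly from $A$ itself, exploiting the defining identity $AA^*A=A$. The central observation I would establish first is that whenever $A$ is a partial isometry, both $A^*A$ and $AA^*$ are orthogonal projections. Self-adjointness of each is immediate. For idempotency I would write $(A^*A)(A^*A)=A^*(AA^*A)=A^*A$, where the middle equality is exactly the partial-isometry condition; the analogous computation $(AA^*)(AA^*)=(AA^*A)A^*=AA^*$ handles $AA^*$. Hence $P:=A^*A$ and $Q:=AA^*$ are orthogonal projections.

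For $(a)\Rightarrow(b)$ I would simply take $W:=A$, which is a partial isometry by hypothesis, and $P:=A^*A$, an orthogonal projection by the previous step. Then $WP=A\,A^*A=A$ by the defining identity, giving the factorization claimed in (b). For $(a)\Rightarrow(c)$ the argument is symmetric: take $W:=A$ and $Q:=AA^*$, so that $QW=AA^*\,A=A$, which is (c).

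An alternative route, if a more geometric flavour is preferred, is to invoke \cref{thm:thm1} and write the compact SVD $A=UV^*$ with $U^*U=V^*V=I_r$. Then $A^*A=VV^*$ and $AA^*=UU^*$ are visibly the orthogonal projections onto the row and column spaces, and the same two identities $A(VV^*)=A$ and $(UU^*)A=A$ follow at once. I would, however, favour the first approach, since it uses only the definition and avoids invoking the spectral characterisation.

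I do not anticipate a genuine obstacle here: once the idempotency of $A^*A$ and $AA^*$ is recorded, both factorizations reduce to a single application of $AA^*A=A$. The only point demanding a little care is to confirm that the projections are \emph{orthogonal} (that is, self-adjoint as well as idempotent), rather than merely idempotent, which is why I would verify self-adjointness explicitly before concluding.
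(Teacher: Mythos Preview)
Your argument is correct. The paper takes the SVD route you mention as an alternative: it writes $A=U\Sigma V^*$ with $\Sigma=I_r$ (via \cref{thm:thm1}) and then factors $A=(UV^*)(VV^*)$ for (b) and $A=(UU^*)(U\Sigma V^*)$ for (c). Since $\Sigma=I_r$, these witnesses are in fact the same as yours---$UV^*=A$, $VV^*=A^*A$, $UU^*=AA^*$---so the two proofs produce identical factorizations. The difference is purely in the derivation: you verify directly from $AA^*A=A$ that $A^*A$ and $AA^*$ are self-adjoint idempotents, whereas the paper detours through the spectral characterization to exhibit them as $VV^*$ and $UU^*$. Your route is more self-contained (it does not appeal to \cref{thm:thm1}) and arguably cleaner; the paper's route has the minor benefit of making the projections' ranges ($\operatorname{ran}V$ and $\operatorname{ran}U$) explicit, though this is not needed for the statement as given.
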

\begin{proof}
    If $A$ is a zero-matrix, then the proofs for both the relations $(a) \implies (b)$ and $(a) \implies (c)$ are trivial.
    
    To show $(a) \implies (b)$ :\\
     Let $A=U \Sigma V^*$ be a partial singular-value decomposition of the non-zero partial isometry $A$. Then $A=(UV^*)(V\Sigma V^*)$, and it is easy to see that $W=UV^*$ is a partial isometry. Following \cref{thm:thm1} $A$ being a partial isometry $\Sigma$ is an identity matrix and thus $P=V\Sigma V^*=VV^*$ is an orthogonal projection.

    To show $(a) \implies (c)$ :\\
    Similarly, using the partial SVD of $A$, we have $A= U \Sigma V^* = (UU^*)(U\Sigma V^*)$. It is trivial that $Q=UU^*$ is an orthogonal projection and $W=U\Sigma V^*$ is a partial isometry (using the information of $\Sigma$ from \cref{thm:thm1}).
    \end{proof}

\cite{partialisometryarxiv, partialisometryelsiver,partialisometrynote,f1,f3,f5,f6,a1} can be referred for more details about partial isometries.

\section{Solution of a Scaled Partial-Isometric Linear System}\label{sec:methods}
Here we outline the $O(mn)$ procedure to evaluate the minimum 2-norm solution of a system $Ax=b$ where A is a scaled partial isometry.

\begin{theorem}\label{thm:direct_solver}
For a matrix $A \in \mathbb{C}^{m\times n}$ with at least one non-zero singular value and all non-zero singular values being the same, the least square solution of the system $Ax=b$ is given by: 
\begin{equation}
x_i =
\begin{cases}
\frac{(\langle A^{(i)}, b\rangle)^*(\langle A^{(i)}, b\rangle)}{(A^{*}b)^{*}A^{*}A^{(i)}} & \text{if } b^{*}AA^{*}A^{(i)} \neq 0 \\
0 & \text{otherwise}
\end{cases}
\label{eq:a1}
\end{equation}
\end{theorem}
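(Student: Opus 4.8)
The plan is to reduce the somewhat opaque componentwise formula to the single clean identity $A^{\dagger} = \sigma^{-2} A^*$, where $\sigma$ is the common value of the non-zero singular values, and then to verify that \eqref{eq:a1} is merely a $\sigma$-free rewriting of $x = A^{\dagger} b$. First I would invoke \cref{thm:thm1}: since $A$ has at least one non-zero singular value and all non-zero singular values coincide, its reduced singular-value decomposition is $A = U\Sigma V^* = \sigma U V^*$ with $\Sigma = \sigma I_r$, $r = \operatorname{rank}(A)\ge 1$, and $U\in\mathbb{C}^{m\times r}$, $V\in\mathbb{C}^{n\times r}$ having orthonormal columns. Recalling that the minimum $2$-norm least-squares solution of $Ax=b$ is $x = A^{\dagger}b$, I would compute $A^{\dagger} = V\Sigma^{-1}U^* = \sigma^{-1}VU^*$ and note that $A^* = \sigma VU^*$, whence $A^{\dagger} = \sigma^{-2}A^*$. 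This yields the target closed form $x = \sigma^{-2}A^*b$, i.e.\ $x_i = \sigma^{-2}\langle A^{(i)}, b\rangle$ for every $i$.

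Next I would record the structural identity that makes the denominator tractable. From the SVD, $AA^*A = U\Sigma^3 V^* = \sigma^2(U\Sigma V^*) = \sigma^2 A$; equivalently, each column obeys $AA^*A^{(i)} = \sigma^2 A^{(i)}$. Substituting this into the denominator of \eqref{eq:a1} gives
\[
(A^*b)^* A^* A^{(i)} = b^* A A^* A^{(i)} = \sigma^2\, b^* A^{(i)} = \sigma^2\,(\langle A^{(i)}, b\rangle)^*,
\]
using $b^* A^{(i)} = \overline{(A^{(i)})^* b} = (\langle A^{(i)}, b\rangle)^*$. The numerator is by definition $(\langle A^{(i)}, b\rangle)^*\langle A^{(i)}, b\rangle$, so on the branch where the denominator is non-zero the quotient telescopes to $\langle A^{(i)}, b\rangle/\sigma^2$, matching the closed form from the first step.

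Finally I would reconcile the two cases. The guard $b^* A A^* A^{(i)}\neq 0$ is, by the computation above, equivalent to $\langle A^{(i)}, b\rangle\neq 0$; hence when the guard fails we have $\langle A^{(i)}, b\rangle = 0$, and the closed form also returns $x_i = \sigma^{-2}\langle A^{(i)}, b\rangle = 0$, consistent with the second branch. Collecting both cases shows that \eqref{eq:a1} coincides coordinatewise with $x = \sigma^{-2}A^*b = A^{\dagger}b$, which is the minimum-norm least-squares solution, proving the claim.

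I expect the main obstacle to be conceptual rather than computational: recognizing that the baroque ratio in \eqref{eq:a1} is engineered precisely so that the unknown scale $\sigma^2$ enters once in the denominator (via $AA^*A^{(i)} = \sigma^2 A^{(i)}$) and cancels against one factor of the numerator, thereby reconstructing $\sigma^{-2}$ from the data without ever forming the SVD. Care is also needed with the conjugation conventions, since $\langle\cdot,\cdot\rangle = v_1^* v_2$ makes the numerator $|\langle A^{(i)},b\rangle|^2$ while the denominator carries the conjugate $(\langle A^{(i)},b\rangle)^*$, and it is exactly this asymmetry that leaves a single unconjugated factor of $\langle A^{(i)},b\rangle$ after cancellation.
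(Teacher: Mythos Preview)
Your proof is correct and follows essentially the same approach as the paper's: both invoke the reduced SVD with $\Sigma=\sigma I_r$ to identify the minimum-norm least-squares solution as $x_i=\sigma^{-1}V_{(i)}U^*b$, then verify that the numerator and denominator of \eqref{eq:a1} reproduce this value on the nonzero branch and that the zero branch is consistent. Your packaging via the clean identities $A^{\dagger}=\sigma^{-2}A^*$ and $AA^*A=\sigma^2 A$ is a bit more streamlined than the paper's coordinate-level SVD bookkeeping, but the underlying computation is identical.
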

\begin{proof}
The case where A does not have a non-zero singular value is trivial.

Let the partial singular value decomposition of A be $A=U\Sigma V^{*}$, where $U \in \mathbb{C}^{m\times r}, \Sigma \in \mathbb{R}^{r\times r}, V \in \mathbb{C}^{n\times r}$. Let $\alpha$ be the non-zero singular values. Then the least square solution of the system of equations $Ax=b$ is given as $x= V\Sigma ^{-1}U^{*}b$ and thus 
\begin{equation}
x_i= \langle e^{(i)},x\rangle=e^{(i)^*}x=V_{(i)}\Sigma ^{-1}U^{*}b=\frac{1}{\alpha}V_{(i)}U^{*}b
\label{eq:a2}
\end{equation}
[ $e^{(i)}$ being an $n$-dimensional canonical-basis vector with the $i^{th}$ entry as 1 and all other entries as 0.]\\

Now, \begin{align}
\label{eq:a3.1}
& A^{(i)}=U\Sigma V_{(i)}^{*}\\ 
\label{eq:a3.2}
 & A^{(i)^{*}}b=V_{(i)}\Sigma U^{*}b\\
 \label{eq:a3.3}
 & A^{*}b=V\Sigma U^{*}b\\
 \label{eq:a3.4}
 & A^{*}A^{(i)}=V \Sigma ^{2}V_{(i)}^{*}
\end{align}

The condition $b^{*}AA^{*}A^{(i)} = 0$ $ \implies b^{*}U\Sigma V^{*}V\Sigma U^{*}U\Sigma V_{(i)}^{*} =0$, using the relations \eqref{eq:a3.1}- \eqref{eq:a3.4}.

\begin{align*}
 & \implies b^{*}U\Sigma ^{3} V_{(i)}^{*} =0 \\
& \implies \alpha ^3 b^*UV_{(i)}^*=0 \\
& \implies \alpha ^4 \frac{1}{\alpha}V_{(i)}U^{*}b=0\\
& \implies \alpha^4 x_i = 0, \text{ using } \eqref{eq:a2},
\end{align*}
which validates our formulation of considering $x_i=0$ when $b^{*}AA^{*}A^{(i)} = 0$ and $\alpha \neq 0$ in $\eqref{eq:a1}$.

Now, when $b^{*}AA^{*}A^{(i)} \neq 0$, the proposed solution
\begin{align*}
    x_i &= \frac{(\langle A^{(i)}, b\rangle)^*(\langle A^{(i)}, b\rangle)}{(A^{*}b)^{*}A^{*}A^{(i)}} \text{ where using relations (3)-(6) above,}\\
    &=\frac{\alpha^ 2b^*UV_{(i)}^{*}V_{(i)}U^{*}b}{\alpha^3 b^*UV_{(i)}^*}=\frac{1}{\alpha}V_{(i)}U^{*}b
\end{align*}
thus satisfying the required relation \eqref{eq:a2} for the solution.
\end{proof}

\textbf{Note-1 :} The cost of computation of $A^{*}b$ is $O(mn)$, and of $((b^{*}A)A^{*})A$ is also $O(mn)$ using three such matrix-vector products, and thus we can evaluate all components of the least square solution in $O(mn)$ computations using the relation in $\Cref{thm:direct_solver}$. Further, it is to be noted that by replacing $AA^*A^{(i)}$ with $A^{(i)}$ in \eqref{eq:a1} when the exact partial isometry holds, we get the solution $x=A^*b$, the least square solution for a partial isometric system that one can verify as $A(A^*b)=AA^*(Ax)=Ax$ given $AA^*A=A$. Whereas, the proposed solution applies to \emph{scaled} partial isometric systems with an unknown scaling factor as well.

%Partial isometry is invariant under unitarily similar transformation and thus the \cref{thm:direct_solver} is applicable for the systems which are unitarily similar to a sacled partial isometry.

In the following part of the paper, we continue to establish some additional properties of scaled partial isometries. Though the definition in \cref{sec:introduction} is complete and sufficient, these properties may be useful in identifying its application in a practical setting.

\begin{theorem}\label{thm:similarity}
Let \( A \in \mathbb{R}^{n \times n} \). Then, $A$ is similar to  $\mu Q$  for some $\mu> 0$  and an orthogonal  $Q$ iff $A$  is diagonalizable and all eigenvalues of A  have magnitude $\mu$.
\end{theorem}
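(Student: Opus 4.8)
The plan is to prove the two implications separately, using as the main structural fact that a real orthogonal matrix is normal, hence diagonalizable over $\mathbb{C}$ with all eigenvalues of modulus $1$.

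For the forward direction ($\Rightarrow$), suppose $A = S(\mu Q)S^{-1}$ for an invertible $S$ and orthogonal $Q$. I would first recall that an orthogonal matrix satisfies $Q^*Q = QQ^* = I$, so it is normal and therefore unitarily diagonalizable, and that every eigenvalue of $Q$ has magnitude $1$. Consequently $\mu Q$ is diagonalizable and all of its eigenvalues have magnitude $\mu$. Since both diagonalizability and the spectrum are invariant under similarity, $A$ inherits exactly these two properties, which is the claim. This direction is essentially immediate.

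For the converse ($\Leftarrow$), the key tool is the real canonical (block-diagonal) form of a real diagonalizable matrix. Assuming $A$ is diagonalizable over $\mathbb{C}$ with every eigenvalue of magnitude $\mu$, and using that the non-real eigenvalues of the real matrix $A$ occur in conjugate pairs $a \pm ib$, I would produce a real invertible $S$ for which $S^{-1}AS$ is block diagonal: each real eigenvalue contributes a $1\times 1$ block, and each conjugate pair contributes a $2\times 2$ block $\begin{pmatrix} a & -b \\ b & a \end{pmatrix}$. The magnitude hypothesis then forces each real eigenvalue to be $\pm\mu$ and each pair to satisfy $a^2 + b^2 = \mu^2$; writing $a = \mu\cos\theta$ and $b = \mu\sin\theta$ turns the $2\times 2$ block into $\mu$ times a rotation matrix. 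Factoring $\mu$ out of the whole block-diagonal matrix leaves a matrix $Q$ whose diagonal blocks are each $\pm 1$ or a $2\times 2$ rotation, all orthogonal, so $Q$ is orthogonal and $A = S(\mu Q)S^{-1}$, as required.

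I expect the main obstacle to be the careful setup of the real canonical form together with the bookkeeping that keeps the similarity transform $S$ real: one must justify that a real matrix diagonalizable over $\mathbb{C}$ is genuinely \emph{real}-similar to the block form above, rather than merely complex-diagonalizable, and then verify that a block-diagonal matrix assembled from orthogonal blocks is itself orthogonal. A secondary point worth stating explicitly is the intended reading of ``diagonalizable'' as diagonalizability over $\mathbb{C}$, since a real matrix with genuinely complex eigenvalues is never diagonalizable over $\mathbb{R}$ yet still falls under this theorem.
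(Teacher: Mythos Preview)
Your proposal is correct and follows essentially the same route as the paper: the forward direction uses that an orthogonal matrix is diagonalizable over $\mathbb{C}$ with unit-modulus eigenvalues and that similarity preserves both properties, while the converse passes to the real block-diagonal canonical form with $1\times 1$ blocks $\pm\mu$ and $2\times 2$ scaled-rotation blocks, then factors out $\mu$ to reveal an orthogonal matrix. In fact you are more explicit than the paper about the two genuinely delicate points---that the similarity producing the block form must be taken over $\mathbb{R}$, and that ``diagonalizable'' is to be read over $\mathbb{C}$---both of which the paper's proof glosses over.
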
 
\begin{proof}
($\Rightarrow$) Suppose $A$ is similar to $\mu Q$ for some $\mu > 0$ and orthogonal $Q$, i.e., $A = P^{-1} (\mu Q) P$ for some invertible $P$. 

Since $Q$ is orthogonal, it is diagonalizable over $\mathbb{C}$, and so is $\mu Q$. Therefore, $A$ is diagonalizable. The eigenvalues of $Q$ all have modulus $1$, so the eigenvalues of $\mu Q$ (and hence of $A$) all have modulus $\mu$.

($\Leftarrow$) Suppose $A$ is diagonalizable and all its eigenvalues have modulus $\mu > 0$. Then $A$ is similar to a diagonal matrix $D = \mathrm{diag}(\lambda_1, \ldots, \lambda_n)$ with $|\lambda_i| = \mu$ for all $i$. Each $\lambda_i$ can be written as $\mu e^{i\theta_i}$. Over $\mathbb{R}$, complex eigenvalues occur in conjugate pairs, so $D$ is similar over $\mathbb{R}$ to a block-diagonal matrix with real blocks (for real eigenvalues) and $2 \times 2$ rotation blocks (for complex conjugate pairs), which is the canonical form of a real orthogonal matrix. Thus, $A$ is similar to $\mu Q$ for some real orthogonal $Q$.
\end{proof}

Next, we show that for any unitarily diagonalizable matrix whose non-zero eigenvalues are the same up to a multiplication of $\pm 1$, we can have a similar result as in \cref{thm:direct_solver} to evaluate the least-squares solution of the corresponding linear system. These kinds of matrices (and any of their unitary similarity) are also scaled partial isometries, because $A=cUDU^*$, with unitary matrices $U$ and the non-zero diagonal entries of $D \in \{\pm 1\}$. It is easy to see that $P=UDU^*$ is a partial isometry and thus $A=cP$ is a scaled partial isometry.

\begin{theorem}\label{thm:thm3}
    Let $A \in \mathbb{C}^{m \times m}$ be unitarily diagonalizable as $A= UDU^*$, with all the non-zero diagonal entries being the same up to a multiplication of $\pm 1$. Then, $A^\dagger=UD^\dagger U^*$.
\end{theorem}
\begin{proof}
    Let B=$UD^\dagger U^*$. We know $U^*U=I$ and $(DD^\dagger)^*=DD^\dagger$. Hence,
    \begin{align}
        & (AB)^*=(UDU^*UD^\dagger U^*)^*=U(D D^\dagger)^* U^*=UDD^\dagger U^*=AB. \label{eq:thm3_1}\\
        & (BA)^{*}=(UD^\dagger U^*UDU^*)^*=U(D^\dagger D)^*U^*=UD^\dagger DU^*=BA. \label{eq:thm3_2}\\
        & ABA=(UDU^*)(U D^\dagger U^*)(UDU^*)=UDD^\dagger DU^*=UDU^*=A,\label{eq:thm3_3} \text{ using $DD^\dagger D=D$.}\\
        & BAB=(UD^\dagger U^*)(UDU^*)(UD^\dagger U^*)=UD^\dagger DD^\dagger U^*= UD^\dagger U^*=B,\label{eq:thm3_4} \text{ using $D^\dagger D D^\dagger =D^\dagger$.}
    \end{align}
    From \eqref{eq:thm3_1}-\eqref{eq:thm3_4}, we know that $B$ is Moore-Penrose pseudoinverse of $A$. Thus, $A^\dagger = B = U D^\dagger U^*$.
\end{proof}

For such unitarily diagonalizble matrices $A=U_1DU_2^*$ (where$U_1,U_2$ are unitary), as mentioned in \cref{thm:thm3}, we can ensure that all the non-zero diagonal entries of $D$ have the real part as non-negative. For the instances, $Re(D_{i,i})<0$, one can flip the sign of $i^{th}$ column of $U_1$, and also of $i^{th}$ row of $D$ to obtain $\tilde{U}, \tilde{D}$ respectively, such that $U_1D=\tilde{U}\tilde{D}$. Now we can use \cref{thm:direct_solver} to evaluate the least-square solution $x_\star=U\tilde{D}^\dagger\tilde{U}^*b$ in $O(mn)$ computations for any such scaled systems (and their unitarily similar systems) in \cref{thm:thm3}.

\textbf{Note-2 :} For a linear system of equations represented as \( Ax = b \), if \( A \) is a block diagonal matrix and each block is a scaled partial isometry, we can still apply Theorem \ref{thm:direct_solver} to each block to obtain the minimum 2-norm solution. It is not necessary for all blocks to be scaled by the same scalar \( c \). Consequently, these types of matrices \( A \) may have different non-zero singular values and eigenvalues, and they are not necessarily the same up to a multiplication of \( \pm 1 \).

\section{Numerical Experiments }\label{sec:numerical_experiments}
 
For \Cref{tab:matrix_with_equal_singular_values}, we generated real matrices using \Cref{algo:matrix_with_equal_singular_values} with entries of $X$, $Y$ from a standard normal distribution. We generated $t \in \mathbb{R}^{n \times 1}$ with entries from a standard normal distribution and $b=At$. For \Cref{tab:matrix_with_equal_singular_values1}, we generated complex matrices by having $X=X_R + iX_I, Y=Y_R + iY_I$ in \Cref{algo:matrix_with_equal_singular_values}. We also generated $t \in \mathbb{C}^{n \times 1}$ through $t_R + it_I$ where we evaluate $b=At$. All random variables $X_R$, $X_I$, $Y_R$, $Y_I$, $t_R$, $t_I$ were independently drawn from a standard normal distribution. In both cases, the true solution $x_\star$ was obtained as $A^\dagger b$. For every fixed size, we averaged over 150 trials and reported the mean value of $\| x-x_\star \|_2$. All experiments have been executed in MATLAB 2024b on a computer with Intel(R) Core(TM) i9-14900K @ 6.0 GHz, 62.00 GB RAM, and 16.00 GB memory.

\Cref{tab:matrix_with_equal_singular_values}and \Cref{tab:matrix_with_equal_singular_values1} presents these experimental results demonstrating the validity of \Cref{thm:direct_solver}. We also know that the algorithm needs only three matrix-vector products in the evaluation of the true solution, which are numerically stable operations, and hence the method is numerically stable.

 \begin{table}%[]
\caption{Validation of \Cref{thm:direct_solver} with \emph{real} matrices generated using  \Cref{algo:matrix_with_equal_singular_values}}\label{tab:matrix_with_equal_singular_values}
\begin{tabular*}{\tblwidth}{@{}LL@{}LL@{}LL@{}LL@{}}
\toprule
  s & 10 & 10 & 10 & 10 \\
        
  Size & 10000 $\times$ 10000 & 10000 $\times$ 2000  & 5000 $\times$ 10000 & 5000 $\times$ 5000\\
         
  r & 2000 & 400 & 2000 & 5000\\
\midrule
 $\|x-x_\star\|_2$  & 1.81e-13 & 3.34e-14  & 1.79e-13 & 1.97e-13\\
\bottomrule
\end{tabular*}
\end{table}

\begin{table}
\caption{Validation of \Cref{thm:direct_solver} with \emph{complex} matrices generated using  \Cref{algo:matrix_with_equal_singular_values}}\label{tab:matrix_with_equal_singular_values1}
\begin{tabular*}{\tblwidth}{@{}LL@{}LL@{}LL@{}LL@{}}
\toprule
  s & 10 & 10 & 10 & 10 \\
        
  Size & 10000 $\times$ 10000 & 10000 $\times$ 2000  & 5000 $\times$ 10000 & 5000 $\times$ 5000\\
         
  r & 2000 & 400 & 2000 & 5000\\
\midrule
 $\|x-x_\star\|_2$  &2.25e-13  &1.20e-13   &6.79e-15  &5.23e-14 \\
\bottomrule
\end{tabular*}
\end{table}

\begin{algorithm}
\caption{Generate Random Matrix with Equal Non-zero Singular Values}\label{algo:matrix_with_equal_singular_values}
\begin{algorithmic}[1]
\Require Integers $m, n, r$ with $r \leq \min(m,n)$, scalar $s$
\Ensure Matrix $A \in \mathbb{C}^{m \times n}$ with rank $r$ and all non-zero singular values equal to $s$
\State Generate a random matrix $X \in \mathbb{C}^{m \times m}$
\State Compute QR decomposition: $X = QR$, set $U \gets Q$
\State Generate a random matrix $Y \in \mathbb{C}^{n \times n}$
\State Compute QR decomposition: $Y = QR$, set $V \gets Q$
\State Initialize $\Sigma \in \mathbb{R}^{m \times n}$ as a zero matrix
\For{$i = 1$ to $r$}
    \State $\Sigma_{i,i} \gets s$
\EndFor
\State Compute $A \gets U \cdot \Sigma \cdot V^*$
\State \Return $A$
\end{algorithmic}
\end{algorithm}

\section{Conclusion}\label{sec:conclusion}
We presented a $O(mn)$ direct least-squares solver for the system of linear equations $Ax=b$ where A is a scaled partial isometry, and also where A is a block diagonal matrix with each block being a scaled partial isometry. Numerical experiments demonstrate the validity of the proposed method.  

%\clearpage %%Remove this from your manuscript

% To print the credit authorship contribution details
%\printcredits

%% Loading bibliography style file
%\bibliographystyle{model1-num-names}
\bibliographystyle{cas-model2-names}

% Loading bibliography database
\bibliography{reference}

\begin{thebibliography}{10}
\expandafter\ifx\csname natexlab\endcsname\relax\def\natexlab#1{#1}\fi
\providecommand{\url}[1]{\texttt{#1}}
\providecommand{\href}[2]{#2}
\providecommand{\path}[1]{#1}
\providecommand{\DOIprefix}{doi:}
\providecommand{\ArXivprefix}{arXiv:}
\providecommand{\URLprefix}{URL: }
\providecommand{\Pubmedprefix}{pmid:}
\providecommand{\doi}[1]{\href{http://dx.doi.org/#1}{\path{#1}}}
\providecommand{\Pubmed}[1]{\href{pmid:#1}{\path{#1}}}
\providecommand{\bibinfo}[2]{#2}
\ifx\xfnm\relax \def\xfnm[#1]{\unskip,\space#1}\fi
%Type = Article
\bibitem[{Badea and Mbekhta(2005)}]{f6}
\bibinfo{author}{Badea, C.}, \bibinfo{author}{Mbekhta, M.}, \bibinfo{year}{2005}.
\newblock \bibinfo{title}{Operators similar to partial isometries}.
\newblock \bibinfo{journal}{Acta Scientiarum Mathematicarum} \bibinfo{volume}{71}.
\newblock \URLprefix \url{https://www.researchgate.net/publication/266218694_Operators_similar_to_partial_isometries}.
%Type = Article
\bibitem[{BRENKEN and NIU(2012)}]{f3}
\bibinfo{author}{BRENKEN, B.}, \bibinfo{author}{NIU, Z.}, \bibinfo{year}{2012}.
\newblock \bibinfo{title}{The c*-algebra of a partial isometry}.
\newblock \bibinfo{journal}{Proceedings of the American Mathematical Society} \bibinfo{volume}{140}, \bibinfo{pages}{199--206}.
\newblock \URLprefix \url{http://www.jstor.org/stable/41411045}.
%Type = Article
\bibitem[{Eldar and Forney(2002)}]{partialisometryieee}
\bibinfo{author}{Eldar, Y.}, \bibinfo{author}{Forney, G.}, \bibinfo{year}{2002}.
\newblock \bibinfo{title}{Optimal tight frames and quantum measurement}.
\newblock \bibinfo{journal}{IEEE Transactions on Information Theory} \bibinfo{volume}{48}, \bibinfo{pages}{599--610}.
\newblock \DOIprefix\doi{10.1109/18.985949}.
%Type = Misc
\bibitem[{Exel et~al.(1997)Exel, Laca and Quigg}]{a1}
\bibinfo{author}{Exel, R.}, \bibinfo{author}{Laca, M.}, \bibinfo{author}{Quigg, J.}, \bibinfo{year}{1997}.
\newblock \bibinfo{title}{Partial dynamical systems and c*-algebras generated by partial isometries}.
\newblock \URLprefix \url{https://arxiv.org/abs/funct-an/9712007}, \href{http://arxiv.org/abs/funct-an/9712007}{\tt arXiv:funct-an/9712007}.
%Type = Misc
\bibitem[{Garcia et~al.(2019)Garcia, Patterson and Ross}]{partialisometryarxiv}
\bibinfo{author}{Garcia, S.R.}, \bibinfo{author}{Patterson, M.O.}, \bibinfo{author}{Ross, W.T.}, \bibinfo{year}{2019}.
\newblock \bibinfo{title}{Partially isometric matrices: a brief and selective survey}.
\newblock \href{http://arxiv.org/abs/1903.11648}{\tt arXiv:1903.11648}.
%Type = Article
\bibitem[{Halmos and McLaughlin(1963)}]{f1}
\bibinfo{author}{Halmos, P.R.}, \bibinfo{author}{McLaughlin, J.E.}, \bibinfo{year}{1963}.
\newblock \bibinfo{title}{{Partial isometries.}}
\newblock \bibinfo{journal}{Pacific Journal of Mathematics} \bibinfo{volume}{13}, \bibinfo{pages}{585 -- 596}.
\newblock \URLprefix \url{https://projecteuclid.org/journals/pacific-journal-of-mathematics/volume-13/issue-2/Partial-isometries/pjm/1103035746.full}.
%Type = Article
\bibitem[{Klema and Laub(1980)}]{svd}
\bibinfo{author}{Klema, V.}, \bibinfo{author}{Laub, A.}, \bibinfo{year}{1980}.
\newblock \bibinfo{title}{The singular value decomposition: Its computation and some applications}.
\newblock \bibinfo{journal}{IEEE Transactions on Automatic Control} \bibinfo{volume}{25}, \bibinfo{pages}{164--176}.
\newblock \DOIprefix\doi{10.1109/TAC.1980.1102314}.
%Type = Article
\bibitem[{Nagy(2019)}]{f5}
\bibinfo{author}{Nagy, B.}, \bibinfo{year}{2019}.
\newblock \bibinfo{title}{{Partial isometries and a general spectral theorem}}.
\newblock \bibinfo{journal}{Advances in Operator Theory} \bibinfo{volume}{4}, \bibinfo{pages}{351 -- 368}.
\newblock \URLprefix \url{https://doi.org/10.15352/aot.1804-1355}, \DOIprefix\doi{10.15352/aot.1804-1355}.
%Type = Article
\bibitem[{Sharifi(2009)}]{partialisometryelsiver}
\bibinfo{author}{Sharifi, K.}, \bibinfo{year}{2009}.
\newblock \bibinfo{title}{Descriptions of partial isometries on hilbert $c^*$-modules}.
\newblock \bibinfo{journal}{Linear Algebra and its Applications} \bibinfo{volume}{431}, \bibinfo{pages}{883--887}.
\newblock \URLprefix \url{https://doi.org/10.1016/j.laa.2009.03.042}.
%Type = Article
\bibitem[{Skoufranis(2014)}]{partialisometrynote}
\bibinfo{author}{Skoufranis, P.}, \bibinfo{year}{2014}.
\newblock \bibinfo{title}{Partial isometries} \URLprefix \url{https://pskoufra.info.yorku.ca/files/2016/07/Partial-Isometries.pdf}. \bibinfo{note}{educational notes, available at author's website}.

\end{thebibliography}

% Biography
%\bio{}
% Here goes the biography details.
%\endbio

%\bio{pic1}
% Here goes the biography details.
%\endbio

\end{document}